\documentclass[12pt]{article}%
\usepackage{amssymb,latexsym}
\usepackage{amsfonts}
\usepackage{amsmath}
\usepackage[colorlinks=true, pdfstartview=FitV, linkcolor=blue,
citecolor=blue, urlcolor=blue]{hyperref}
\usepackage{color}
\usepackage{amssymb}
\usepackage{graphicx}
\usepackage[compress]{cite}%
\setcounter{MaxMatrixCols}{30}
\providecommand{\U}[1]{\protect\rule{.1in}{.1in}}
\newtheorem{theorem}{Theorem}

\newtheorem{lemma}[theorem]{Lemma}

\newtheorem{remark}[theorem]{Remark}

\newenvironment{proof}[1][Proof]{\noindent\textbf{#1.} }{\ \rule{0.5em}{0.5em}}
\numberwithin{equation}{section}
\numberwithin{theorem}{section}
\allowdisplaybreaks
\begin{document}

\title{Closed formulas and determinantal expressions for higher-order Bernoulli and
Euler polynomials in terms of Stirling numbers}
\author{Muhammet Cihat DA\u{G}LI\\Department of Mathematics, Akdeniz University,\\07058-Antalya, Turkey\\E-mail: mcihatdagli@akdeniz.edu.tr}
\date{}
\maketitle

\begin{abstract}
In this paper, applying the Fa\`{a} di Bruno formula and some properties of
Bell polynomials, several closed formulas and determinantal expressions
involving Stirling numbers of the second kind for higher-order Bernoulli and
Euler polynomials are presented.

\textbf{Keywords:} Bernoulli and Euler polynomials, Stirling numbers, Closed
forms, Determinantal expression.

\textbf{Mathematics Subject Classification 2010:} 11B68, 05A19, 11B83, 11C20, 11Y55.

\end{abstract}

\section{Introduction}

The classical Bernoulli polynomials $B_{n}(x)$ and Euler polynomials
$E_{n}(x)$ are usually defined by means of the following generating functions:%
\[
\dfrac{te^{xt}}{e^{t}-1}=%
{\displaystyle\sum\limits_{n=0}^{\infty}}
B_{n}(x)\dfrac{t^{n}}{n!}\text{ }\left(  \left\vert t\right\vert <2\pi\right)
\text{ and }\dfrac{2e^{xt}}{et+1}=%
{\displaystyle\sum\limits_{n=0}^{\infty}}
E_{n}(x)\dfrac{t^{n}}{n!}\text{ }\left(  \left\vert t\right\vert <\pi\right)
.
\]
In particular, the rational numbers $B_{n}=B_{n}(0)$ and integers $E_{n}%
=2^{n}E_{n}(1/2)$ are called classical Bernoulli numbers and Euler numbers, respectively.

As is well known, the classical Bernoulli and Euler polynomials play important
roles in different areas of mathematics such as number theory, combinatorics,
special functions and analysis.

Numerous generalizations of these polynomials and numbers are defined and many
properties are studied in a variety of context. One of them can be traced back
to N\"{o}rlund \cite{n}: The higher-order Bernoulli polynomials $B_{n}%
^{(\alpha)}(x)$ and higher-order Euler polynomials $E_{n}^{(\alpha)}(x)$, each
of degree $n$ in $x$ and in $\alpha$, are defined by means of the generating
functions
\begin{equation}
\left(  \dfrac{t}{e^{t}-1}\right)  ^{\alpha}e^{xt}=%
{\displaystyle\sum\limits_{n=0}^{\infty}}
B_{n}^{(\alpha)}(x)\dfrac{t^{n}}{n!} \label{0}%
\end{equation}
and%
\begin{equation}
\text{ }\left(  \dfrac{2}{e^{t}+1}\right)  ^{\alpha}e^{xt}=%
{\displaystyle\sum\limits_{n=0}^{\infty}}
E_{n}^{(\alpha)}(x)\dfrac{t^{n}}{n!}, \label{15}%
\end{equation}
respectively. For $\alpha=1$, we have $B_{n}^{(1)}(x)=B_{n}(x)$ and
$E_{n}^{(1)}(x)=E_{n}(x)$.

According to Wiki \cite{closed}, "In mathematics, a closed-form expression is
a mathematical expression that can be evaluated in a finite number of
operations. It may contain constants, variables, certain `well-known'
operations (e.g.,$+-\times\div$), and functions (e.g., $n$th root, exponent,
logarithm, trigonometric functions, and inverse hyperbolic functions), but
usually no limit."

From this point of view, Wei and Qi \cite{wei} studied several closed form
expressions for Euler polynomials in terms of determinant and the Stirling
numbers of the second kind. Also, Qi and Chapman \cite{qi2} established two
closed forms for the Bernoulli polynomials and numbers involving the Stirling
numbers of the second kind and in terms of a determinant of combinatorial
numbers. Moreover, some special determinantal expressions and recursive
formulas for Bernoulli polynomials and numbers can be found in \cite{qi6}.

In 2018, Hu and Kim \cite{hu-kim} presented two closed formulas for
Apostol-Bernoulli polynomials by aid of the properties of the Bell polynomials
of the second kind $B_{n,k}\left(  x_{1},x_{2},...,x_{n-k+1}\right)  $ (see
Lemma \ref{3}, below). Very recently, Dai and Pan \cite{d} have obtained the
closed forms for degenerate Bernoulli polynomials.

In this paper, we focus on higher-order Bernoulli and Euler polynomials in
those respects, mentioned above. Firstly, we find some novel closed formulas
for higher-order Bernoulli and Euler polynomials in terms of Stirling numbers
of the second kind $S(n,k)$ via the Fa\`{a} di Bruno formula for the Bell
polynomials of the second kind and the generating function methods. Secondly,
we establish some determinantal expressions by applying a formula of higher
order derivative for the ratio of two differentiable functions. Consequently,
taking some special cases in our results provides the further formulas for
classical Bernoulli and Euler polynomials, and numbers.

\section{Some Lemmas}

In order to prove our main results, we recall several lemmas below.

\begin{lemma}
\label{3}(\cite[p. 134 and 139]{Comtet}) The Bell polynomials of the second
kind, or say, partial Bell polynomials, denoted by $B_{n,k}\left(  x_{1}%
,x_{2},...,x_{n-k+1}\right)  $ for $n\geq k\geq0,$ defined by
\[
B_{n,k}\left(  x_{1},x_{2},...,x_{n-k+1}\right)  =%
{\displaystyle\sum\limits_{\substack{1\leq i\leq n,\text{ }l_{i}\in\left\{
0\right\}  \cup\mathbb{N}\\{{\textstyle\sum\nolimits_{i=1}^{n}}}%
il_{i}=n,\text{ }{{\textstyle\sum\nolimits_{i=1}^{n}}}l_{i}=k}}^{\infty}}
\frac{n!}{%
{\textstyle\prod\nolimits_{i=1}^{l-k+1}}
l_{i}!}%
{\textstyle\prod\limits_{i=1}^{l-k+1}}
\left(  \frac{x_{i}}{i!}\right)  ^{l_{i}}.
\]
The Fa\`{a} di Bruno formula can be described in terms of the Bell polynomials
of the second kind $B_{n,k}\left(  x_{1},x_{2},...,x_{n-k+1}\right)  $ by%
\begin{equation}
\frac{d^{n}}{dt^{n}}f\circ h\left(  t\right)  =%
{\displaystyle\sum\limits_{k=0}^{n}}
f^{\left(  k\right)  }\left(  h\left(  t\right)  \right)  B_{n,k}\left(
h^{\prime}\left(  t\right)  ,h^{\prime\prime}\left(  t\right)
,...,h^{(n-k+1)}\left(  t\right)  \right)  . \label{2}%
\end{equation}

\end{lemma}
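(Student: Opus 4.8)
Since this is Comtet's classical rendering of the Fa\`{a} di Bruno formula \cite{Comtet}, the plan is to reproduce the standard generating-function argument. First I would record the exponential generating function of the partial Bell polynomials,
\[
\sum_{n=k}^{\infty}B_{n,k}(x_{1},x_{2},\dots )\,\frac{t^{n}}{n!}=\frac{1}{k!}\left(  \sum_{m=1}^{\infty}x_{m}\frac{t^{m}}{m!}\right)  ^{k},
\]
which follows directly from the defining sum: on the index set $\sum_{i}il_{i}=n$ one has $t^{n}=\prod_{i}(t^{i})^{l_{i}}$, so the left-hand side collapses to $\sum_{\sum_{i}l_{i}=k}\frac{1}{\prod_{i}l_{i}!}\prod_{i}(x_{i}t^{i}/i!)^{l_{i}}$, and the multinomial expansion of the $k$-th power on the right produces the same sum once the weight $k!/\prod_{i}l_{i}!$ is cancelled against the factor $1/k!$.

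Next I would fix an evaluation point $t_{0}$, put $u:=h(t_{0}+s)-h(t_{0})=\sum_{m\geq 1}h^{(m)}(t_{0})\,s^{m}/m!$, and expand $f$ by Taylor's theorem about $h(t_{0})$, namely $f(h(t_{0}+s))=\sum_{k\geq 0}\frac{f^{(k)}(h(t_{0}))}{k!}\,u^{k}$. Substituting the series for $u$ and invoking the generating-function identity above with $x_{m}=h^{(m)}(t_{0})$, the coefficient of $s^{n}/n!$ on the right becomes $\sum_{k=0}^{n}f^{(k)}(h(t_{0}))\,B_{n,k}(h'(t_{0}),\dots ,h^{(n-k+1)}(t_{0}))$ --- the sum truncating at $k=n$ since $B_{n,k}=0$ for $k>n$ --- whereas the coefficient of $s^{n}/n!$ on the left equals $\big(\tfrac{d^{n}}{dt^{n}}f\circ h\big)(t_{0})$; as $t_{0}$ is arbitrary this is exactly \eqref{2}. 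A purely formal alternative, sidestepping convergence issues, is induction on $n$: differentiate the claimed identity once more, apply the product and chain rules together with the standard derivative rule for Bell polynomials, re-index the result, and match the coefficient of each $f^{(k)}(h(t))$ against the recurrence $B_{n+1,k}(x_{1},\dots )=\sum_{i=1}^{n-k+2}\binom{n}{i-1}x_{i}\,B_{n+1-i,k-1}(x_{1},\dots )$.

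The main obstacle here is purely combinatorial bookkeeping rather than any conceptual subtlety. In the generating-function route one has to check that the multinomial coefficient arising from $\big(\sum x_{m}t^{m}/m!\big)^{k}$ matches the weight $n!/\prod_{i}l_{i}!$ in the definition of $B_{n,k}$, and keep the two truncation bounds straight (the product in the definition runs to $n-k+1$, while the partition identity $\sum_{i}il_{i}=n$ is itself unbounded in $i$); in the inductive route the delicate point is the re-indexing needed for the recurrence for $B_{n+1,k}$ to emerge with the correct binomial weights. An off-by-one in the last argument $h^{(n-k+1)}$ of the Bell polynomial is the likeliest slip, so I would sanity-check the formula for $n=1$ and $n=2$ before trusting the general manipulation.
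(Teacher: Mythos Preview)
Your argument is a correct and standard derivation of the Fa\`{a} di Bruno formula via the exponential generating function of the partial Bell polynomials (with the inductive variant as a legitimate alternative). However, the paper does not prove this lemma at all: it is quoted verbatim from Comtet \cite[pp.~134, 139]{Comtet} and used as a black box. So there is nothing to compare against---you have supplied a proof where the paper simply cites one.
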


\begin{lemma}
\label{4}(\cite[p. 135]{Comtet}) For $n\geq k\geq0,$ we have%
\begin{equation}
B_{n,k}\left(  abx_{1},ab^{2}x_{2},...,ab^{n-k+1}x_{n-k+1}\right)  =a^{k}%
b^{n}B_{n,k}\left(  x_{1},x_{2},...,x_{n-k+1}\right)  , \label{5}%
\end{equation}
where $a$ and $b$ are any complex number.
\end{lemma}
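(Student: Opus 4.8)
The plan is to establish \eqref{5} directly from the explicit combinatorial sum for $B_{n,k}$ recorded in Lemma \ref{3}. First I would substitute the scaled arguments $x_i \mapsto ab^i x_i$ into that sum. The multinomial-type coefficient $n!/\prod_i l_i!$ depends only on the summation indices $l_i$, not on the arguments, so it is untouched; meanwhile each factor $(x_i/i!)^{l_i}$ becomes $(ab^i x_i/i!)^{l_i} = a^{l_i} b^{i l_i}(x_i/i!)^{l_i}$.

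The key step is then to gather the powers of $a$ and $b$ across the product over $i$: one obtains a total power $a^{\sum_i l_i}$ and $b^{\sum_i i l_i}$. This is exactly where the two constraints built into the index set of the sum come into play: every admissible tuple $(l_i)$ satisfies $\sum_i l_i = k$ and $\sum_i i l_i = n$, so the resulting factor $a^k b^n$ is the same for every term. Pulling $a^k b^n$ out in front of the sum leaves precisely $B_{n,k}(x_1,\dots,x_{n-k+1})$, which is the assertion.

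Alternatively, the same identity drops out of the generating-function characterization of the partial Bell polynomials: replacing $x_m$ by $ab^m x_m$ turns $\sum_{m\geq 1} x_m t^m/m!$ into $a\sum_{m\geq 1} x_m (bt)^m/m!$, so raising to the $k$th power and dividing by $k!$ introduces a factor $a^k$ and replaces $t$ by $bt$; comparing the coefficients of $t^n/n!$ on both sides yields \eqref{5}. There is no genuine obstacle here — the argument is pure bookkeeping of exponents — and the only point that warrants a moment's care is that the summation range in Lemma \ref{3} is described entirely through the $l_i$ and is therefore unaffected by rescaling the arguments, so that the hypothesis that $a$ and $b$ are arbitrary complex numbers suffices, with no positivity or invertibility assumption needed.
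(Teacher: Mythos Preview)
Your argument is correct: substituting $x_i \mapsto ab^i x_i$ into the explicit sum of Lemma~\ref{3} and using the constraints $\sum_i l_i = k$ and $\sum_i i l_i = n$ to extract the common factor $a^k b^n$ is exactly the standard verification, and your alternative via the exponential generating function is equally valid. Note, however, that the paper does not supply its own proof of this lemma --- it is quoted from \cite[p.~135]{Comtet} as a known identity --- so there is no in-paper argument to compare against; your proposal simply fills in the omitted verification.
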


\begin{lemma}
(\cite[p. 135]{Comtet})For $n\geq k\geq0,$ we have%
\begin{equation}
B_{n,k}\left(  1,1,...,1\right)  =S\left(  n,k\right)  , \label{12}%
\end{equation}
where $S\left(  n,k\right)  $ is the Stirling numbers of the second kind,
defined by \cite[p. 206]{Comtet}
\[
\frac{\left(  e^{t}-1\right)  ^{k}}{k!}=%
{\displaystyle\sum\limits_{n=k}^{\infty}}
S\left(  n,k\right)  \frac{t^{n}}{n!}.
\]

\end{lemma}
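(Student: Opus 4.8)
The plan is to read \eqref{12} off the Fa\`a di Bruno formula \eqref{2} by choosing the two functions so that their composition is $(e^{t}-1)^{k}$, and then comparing with the generating function for $S(n,k)$ quoted in the statement. Concretely, fix $k\ge 0$ and set
\[
f(u)=u^{k},\qquad h(t)=e^{t}-1,
\]
so that $(f\circ h)(t)=(e^{t}-1)^{k}$. Since $h^{(j)}(t)=e^{t}$ for every $j\ge 1$, one has $h(0)=0$ and $h^{(j)}(0)=1$ for all $j\ge 1$; hence, after evaluating at $t=0$, all of the entries of the partial Bell polynomials appearing on the right-hand side of \eqref{2} are equal to $1$.

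Next I would compute the outer derivatives at the origin. For $m>k$ we have $f^{(m)}\equiv 0$, and for $0\le m<k$ we have $f^{(m)}(u)=\frac{k!}{(k-m)!}u^{k-m}$, which vanishes at $u=0$; only $m=k$ contributes, with $f^{(k)}(0)=k!$. Because $k\le n$, the index $m=k$ does occur in the sum $\sum_{m=0}^{n}$ in \eqref{2}, so setting $t=0$ there collapses the right-hand side to the single term $k!\,B_{n,k}(1,1,\ldots,1)$, i.e.
\[
\left.\frac{d^{n}}{dt^{n}}(e^{t}-1)^{k}\right|_{t=0}=k!\,B_{n,k}(1,1,\ldots,1).
\]
On the other hand, differentiating the series $\dfrac{(e^{t}-1)^{k}}{k!}=\sum_{n\ge k}S(n,k)\dfrac{t^{n}}{n!}$ from the statement $n$ times and putting $t=0$ gives $\left.\frac{d^{n}}{dt^{n}}(e^{t}-1)^{k}\right|_{t=0}=k!\,S(n,k)$. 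Comparing the two and cancelling $k!$ yields $B_{n,k}(1,1,\ldots,1)=S(n,k)$, with the case $k=0$ (both sides $\delta_{n,0}$) being trivial.

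An equivalent route avoids \eqref{2} entirely: start from the bivariate exponential generating function $\sum_{n\ge k}B_{n,k}(x_{1},x_{2},\ldots)\frac{t^{n}}{n!}=\frac{1}{k!}\bigl(\sum_{j\ge 1}x_{j}\frac{t^{j}}{j!}\bigr)^{k}$ (itself a restatement of the combinatorial definition in Lemma \ref{3} via the multinomial theorem), specialize all $x_{j}=1$ so that the inner series becomes $e^{t}-1$, and match coefficients of $t^{n}/n!$ with the series for $S(n,k)$. There is no genuine obstacle in either version; the only point deserving attention is the vanishing of $f^{(m)}(0)$ for $m<k$ (equivalently, the fact that in the multinomial expansion only the full $k$-th power survives at order $t^{n}$ after the specialization), which is what forces the single-term collapse.
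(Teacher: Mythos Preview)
Your argument is correct. Applying \eqref{2} with $f(u)=u^{k}$ and $h(t)=e^{t}-1$ is exactly the right choice: $h^{(j)}(0)=1$ for all $j\ge1$, while $f^{(m)}(0)$ vanishes for $m\ne k$ and equals $k!$ for $m=k$, so the Fa\`a di Bruno sum collapses to $k!\,B_{n,k}(1,\ldots,1)$, which you then match with $k!\,S(n,k)$ via the generating function in the statement. The alternative route through the exponential generating function of the partial Bell polynomials is equally valid and arguably cleaner, since it bypasses the derivative computation entirely.

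As for comparison with the paper: the paper does not prove this lemma at all. It is quoted as a classical identity from Comtet \cite[p.~135]{Comtet} and used as a black box in the proof of Theorem~\ref{main1}. So you have supplied a self-contained proof where the paper simply cites the literature; there is nothing to compare beyond that.
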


\begin{lemma}
(\cite{guo})For $n\geq k\geq1,$ we have%
\begin{equation}
B_{n,k}\left(  \frac{1}{2},\frac{1}{3},...,\frac{1}{n-k+2}\right)  =\frac
{n!}{\left(  n+k\right)  !}%
{\displaystyle\sum\limits_{i=0}^{k}}
\left(  -1\right)  ^{k-i}\binom{n+k}{k-i}S\left(  n+i,i\right)  . \label{6}%
\end{equation}

\end{lemma}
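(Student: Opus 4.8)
The plan is to pass to exponential generating functions and isolate the Stirling content. Taking $f(u)=u^{k}/k!$ and $h(t)=\sum_{m\ge 1}x_{m}t^{m}/m!$ in the Fa\`{a} di Bruno formula \eqref{2} and evaluating at $t=0$ (note $h(0)=0$, and $f^{(j)}(0)=0$ for $j\ne k$ while $f^{(k)}(0)=1$) gives the standard identity
\[
\sum_{n=k}^{\infty}B_{n,k}(x_{1},x_{2},\ldots)\frac{t^{n}}{n!}=\frac{1}{k!}\left(\sum_{m=1}^{\infty}x_{m}\frac{t^{m}}{m!}\right)^{k}.
\]
Specializing $x_{m}=\frac{1}{m+1}$ makes the inner series equal to $\sum_{m\ge 1}\frac{t^{m}}{(m+1)!}=\frac{1}{t}\left(e^{t}-1-t\right)$, so that
\[
\sum_{n=k}^{\infty}B_{n,k}\!\left(\frac{1}{2},\frac{1}{3},\ldots,\frac{1}{n-k+2}\right)\frac{t^{n}}{n!}=\frac{\left(e^{t}-1-t\right)^{k}}{k!\,t^{k}}.
\]
Because $e^{t}-1-t$ vanishes to order $2$ at the origin, the right-hand side is a genuine power series beginning at $t^{k}$, consistent with $B_{n,k}=0$ for $n<k$.

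Next I would expand the numerator into Stirling pieces. Writing $e^{t}-1-t=(e^{t}-1)+(-t)$ and applying the binomial theorem gives $\left(e^{t}-1-t\right)^{k}=\sum_{i=0}^{k}\binom{k}{i}(-1)^{k-i}t^{k-i}\left(e^{t}-1\right)^{i}$, so the generating function becomes $\frac{1}{k!}\sum_{i=0}^{k}\binom{k}{i}(-1)^{k-i}t^{-i}\left(e^{t}-1\right)^{i}$. Now invoke the generating function of the Stirling numbers of the second kind recalled above in the form $\left(e^{t}-1\right)^{i}=i!\sum_{m\ge i}S(m,i)t^{m}/m!$; after the index shift $m=j+i$ this reads
\[
t^{-i}\left(e^{t}-1\right)^{i}=i!\sum_{j=0}^{\infty}S(j+i,i)\frac{t^{j}}{(j+i)!}.
\]

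Finally I would read off the coefficient of $t^{n}$ and simplify. Summing the contributions over $i$, the coefficient of $t^{n}$ in the generating function is $\frac{1}{k!}\sum_{i=0}^{k}\binom{k}{i}(-1)^{k-i}i!\,\dfrac{S(n+i,i)}{(n+i)!}$, and multiplying by $n!$ yields
\[
B_{n,k}\!\left(\frac{1}{2},\ldots,\frac{1}{n-k+2}\right)=\frac{n!}{k!}\sum_{i=0}^{k}(-1)^{k-i}\binom{k}{i}\frac{i!}{(n+i)!}S(n+i,i).
\]
The asserted closed form then follows from the elementary identity $\dfrac{1}{k!}\binom{k}{i}\dfrac{i!}{(n+i)!}=\dfrac{1}{(k-i)!\,(n+i)!}=\dfrac{1}{(n+k)!}\binom{n+k}{k-i}$. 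I expect the only point requiring care to be keeping the index shifts straight when going from $\left(e^{t}-1\right)^{i}$ to the coefficient of $t^{n}$; the final binomial rearrangement and the order-of-vanishing remark are routine and serve mainly as consistency checks.
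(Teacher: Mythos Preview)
The paper does not actually prove this lemma; it is quoted from \cite{guo} and used as a black box in the proofs of Theorems~\ref{main} and~\ref{main2}. So there is nothing to compare against, and the relevant question is simply whether your argument is sound.

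It is. The generating-function identity for the partial Bell polynomials that you derive via $f(u)=u^{k}/k!$ and Fa\`{a} di Bruno is the standard one (it also appears in Comtet), and specializing $x_{m}=1/(m+1)$ correctly gives
\[
\sum_{n\ge k}B_{n,k}\!\left(\tfrac12,\tfrac13,\ldots\right)\frac{t^{n}}{n!}=\frac{(e^{t}-1-t)^{k}}{k!\,t^{k}}.
\]
Your binomial expansion of $(e^{t}-1-t)^{k}$, substitution of $(e^{t}-1)^{i}=i!\sum_{m\ge i}S(m,i)t^{m}/m!$, and extraction of the $t^{n}$ coefficient are all straightforward and correct; the final algebraic identity
\[
\frac{1}{k!}\binom{k}{i}\frac{i!}{(n+i)!}=\frac{1}{(k-i)!\,(n+i)!}=\frac{1}{(n+k)!}\binom{n+k}{k-i}
\]
is immediate. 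The only cosmetic point is that the $i=0$ summand carries $S(n,0)$, which vanishes for $n\ge1$, so it is harmless to include. Your proof thus supplies what the paper omits.
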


\begin{lemma}
\label{1}(\cite[p. 40, Entry 5]{bourbaki}) For two differentiable functions
$p\left(  x\right)  $ and $q\left(  x\right)  \neq0,$ we have for $k\geq0$%
\begin{align}
&  \frac{d^{k}}{dx^{k}}\left[  \frac{p\left(  x\right)  }{q\left(  x\right)
}\right] \nonumber\\
&  =\frac{\left(  -1\right)  ^{k}}{q^{k+1}}\left\vert
\begin{array}
[c]{cccccccc}%
p & q & 0 & . & . & . & 0 & 0\\
p^{\prime} & q^{\prime} & q & . & . & . & 0 & 0\\
p^{\prime\prime} & q^{\prime\prime} & \binom{2}{1}q^{\prime} & . & . & . & 0 &
0\\
. & . & . & . & . & . & . & .\\
. & . & . & . & . & . & . & .\\
. & . & . & . & . & . & . & .\\
p^{\left(  k-2\right)  } & q^{\left(  k-2\right)  } & \binom{k-2}{1}q^{(k-3)}
& . & . & . & q & 0\\
p^{\left(  k-1\right)  } & q^{\left(  k-1\right)  } & \binom{k-1}{1}q^{(k-2)}
& . & . & . & \binom{k-1}{k-2}q^{\prime} & q\\
p^{\left(  k\right)  } & q^{\left(  k\right)  } & \binom{k}{1}q^{(k-1)} & . &
. & . & \binom{k}{k-2}q^{\prime\prime} & \binom{k}{k-1}q^{\prime}%
\end{array}
\right\vert \label{10}%
\end{align}
In other words, the formula (\ref{10}) can be represented as%
\[
\frac{d^{k}}{dx^{k}}\left[  \frac{p\left(  x\right)  }{q\left(  x\right)
}\right]  =\frac{\left(  -1\right)  ^{k}}{q^{k+1}}\left\vert W_{\left(
k+1\right)  \times\left(  k+1\right)  }\left(  x\right)  \right\vert ,
\]
where $\left\vert W_{\left(  k+1\right)  \times\left(  k+1\right)  }\left(
x\right)  \right\vert $ denotes the determinant of the matrix%
\[
W_{\left(  k+1\right)  \times\left(  k+1\right)  }\left(  x\right)  =\left[
U_{\left(  k+1\right)  \times1}\left(  x\right)  \text{ \ \ }V_{\left(
k+1\right)  \times k}\left(  x\right)  \right]  .
\]
Here $U_{\left(  k+1\right)  \times1}\left(  x\right)  $ has the elements
$u_{l,1}\left(  x\right)  =p^{\left(  l-1\right)  }\left(  x\right)  $ for
$1\leq l\leq k+1$ and $V_{\left(  k+1\right)  \times k}\left(  x\right)  $ has
the entries of the form%
\[
v_{i,j}\left(  x\right)  =%
\begin{cases}
\binom{i-1}{j-1}q^{\left(  i-j\right)  }\left(  x\right)  , & \text{if
}i-j\geq0\text{;}\\
0, & \text{if }i-j<0\text{,}%
\end{cases}
\]
for $1\leq i\leq k+1$ and $1\leq j\leq k.$
\end{lemma}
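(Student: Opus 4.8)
The plan is to prove Lemma \ref{1} by converting the differentiation problem into a triangular linear system and then invoking Cramer's rule. Set $f(x)=p(x)/q(x)$, so that $p=qf$. Since $q\neq0$, the function $f$ is as differentiable as $p$ and $q$, and differentiating the identity $p=qf$ repeatedly via the Leibniz rule yields, for each $l$ with $0\leq l\leq k$,
\[
p^{(l)}(x)=\sum_{j=0}^{l}\binom{l}{j}q^{(l-j)}(x)\,f^{(j)}(x).
\]
Reading these $k+1$ equations as a linear system in the $k+1$ unknowns $f^{(0)},f^{(1)},\dots,f^{(k)}$, the coefficient matrix is exactly the $(k+1)\times(k+1)$ lower-triangular matrix whose $(i,j)$ entry is $\binom{i-1}{j-1}q^{(i-j)}$ when $i\geq j$ and $0$ otherwise; this is the matrix obtained from the block $V_{(k+1)\times k}$ of the statement by adjoining one further column on the right in the same pattern. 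Its diagonal entries are all $\binom{i-1}{i-1}q^{(0)}=q$, so its determinant is $q^{k+1}\neq0$ and the system has a unique solution.

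Next I would solve for $f^{(k)}$ by Cramer's rule. This expresses $f^{(k)}$ as the determinant of the matrix obtained by replacing the last column of the coefficient matrix with the right-hand side vector $\big(p,p',\dots,p^{(k)}\big)^{T}$, divided by $q^{k+1}$; call this replaced matrix $\widetilde{W}$, so that $f^{(k)}=\det\widetilde{W}/q^{k+1}$. By construction $\widetilde{W}$ consists of the $k$ columns of $V_{(k+1)\times k}$ followed by the column $U_{(k+1)\times1}$ with entries $p^{(l-1)}$, whereas the matrix $W_{(k+1)\times(k+1)}$ in the statement consists of $U_{(k+1)\times1}$ first and then the $k$ columns of $V_{(k+1)\times k}$. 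Moving the last column of $\widetilde{W}$ to the front past the remaining $k$ columns is a cyclic permutation realized by $k$ adjacent transpositions, hence $\det W=(-1)^{k}\det\widetilde{W}$. Combining these facts gives $\frac{d^{k}}{dx^{k}}\big[p/q\big]=f^{(k)}=\frac{(-1)^{k}}{q^{k+1}}\det W$, which is precisely (\ref{10}); the restatement in terms of the blocks $U_{(k+1)\times1}$ and $V_{(k+1)\times k}$ is then just a renaming and needs nothing further.

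I expect the only delicate points to be purely bookkeeping: checking that the binomial coefficients produced by the Leibniz rule line up exactly with the entries $v_{i,j}=\binom{i-1}{j-1}q^{(i-j)}$ (so the coefficient matrix is genuinely the advertised triangular matrix), and tracking the sign $(-1)^{k}$ arising from the column permutation that reconciles the Cramer matrix $\widetilde{W}$ with the stated matrix $W$. As an alternative one could argue by induction on $k$, expanding $\det W$ along its first column and verifying that differentiating the order-$(k-1)$ formula reproduces the order-$k$ formula; but this route requires messier cofactor manipulations, so the Cramer's-rule argument above is the one I would carry out.
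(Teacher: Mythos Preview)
Your argument is correct: writing $p=qf$, applying Leibniz to obtain $p^{(l)}=\sum_{j=0}^{l}\binom{l}{j}q^{(l-j)}f^{(j)}$ for $0\le l\le k$, and solving the resulting lower-triangular system for $f^{(k)}$ by Cramer's rule gives exactly the claimed determinant after the column permutation you describe; the bookkeeping you flag (matching $\binom{l}{j}q^{(l-j)}$ with $v_{i,j}=\binom{i-1}{j-1}q^{(i-j)}$ under $i=l+1$, $j\mapsto j+1$, and counting the $k$ transpositions) is routine and goes through.

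Note, however, that the paper itself does not prove this lemma at all: it is merely quoted from Bourbaki \cite[p.~40, Entry~5]{bourbaki} as a preliminary tool, so there is no ``paper's own proof'' to compare against. Your Cramer's-rule derivation is in fact the standard proof of this identity (and essentially the argument behind Bourbaki's exercise), so you have supplied what the paper leaves to the reference.
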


\section{Closed formulas}

In this section, we give closed formulas for higher-order Bernoulli and Euler polynomials.

\begin{theorem}
\label{main}The higher-order Bernoulli polynomials $B_{n}^{(\alpha)}(x)$ can
be expressed as
\[
B_{n}^{(\alpha)}(x)=%
{\displaystyle\sum\limits_{k=0}^{n}}
\binom{n}{k}%
{\displaystyle\sum\limits_{i=0}^{k}}
\left\langle -\alpha\right\rangle _{i}\frac{k!}{(k+i)!}%
{\displaystyle\sum\limits_{j=0}^{i}}
\left(  -1\right)  ^{i-j}\binom{k+i}{i-j}S(k+j,j)x^{n-k},
\]
where $S(n,k)$ is the Stirling numbers of the second kind and $\left\langle
x\right\rangle _{n}$ denotes the falling factorial, defined for $x\in%
\mathbb{R}
$ by
\[
\left\langle x\right\rangle _{n}=\prod\limits_{k=0}^{n-1}\left(  x-k\right)  =%
\begin{cases}
x\left(  x-1\right)  ...(x-n+1), & \text{if }n\geq1\text{;}\\
1, & \text{if }n=0.
\end{cases}
\]
In particular $x=0,$ the higher-order Bernoulli numbers $B_{n}^{(\alpha)}$
possess the following form%
\begin{equation}
B_{n}^{(\alpha)}=%
{\displaystyle\sum\limits_{i=0}^{n}}
\left\langle -\alpha\right\rangle _{i}\frac{n!}{(n+i)!}%
{\displaystyle\sum\limits_{j=0}^{i}}
\left(  -1\right)  ^{i-j}\binom{n+i}{i-j}S(n+j,j). \label{11}%
\end{equation}

\end{theorem}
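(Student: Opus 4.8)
The plan is to start from the generating function $\left(\frac{t}{e^{t}-1}\right)^{\alpha}e^{xt}=\sum_{n\geq0}B_{n}^{(\alpha)}(x)\frac{t^{n}}{n!}$ and extract $B_{n}^{(\alpha)}(x)$ by differentiating $n$ times and setting $t=0$. Writing the left-hand side as $e^{xt}\,g(t)^{\alpha}$ with $g(t)=\frac{t}{e^{t}-1}$, the Leibniz rule gives $B_{n}^{(\alpha)}(x)=\sum_{k=0}^{n}\binom{n}{k}x^{n-k}\left.\frac{d^{k}}{dt^{k}}g(t)^{\alpha}\right|_{t=0}$, so the $x^{n-k}$ factor and the outer binomial $\binom{n}{k}$ appear immediately. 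It then remains to show that $\left.\frac{d^{k}}{dt^{k}}g(t)^{\alpha}\right|_{t=0}$ equals the inner double sum $\sum_{i=0}^{k}\langle-\alpha\rangle_{i}\frac{k!}{(k+i)!}\sum_{j=0}^{i}(-1)^{i-j}\binom{k+i}{i-j}S(k+j,j)$.

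To evaluate that derivative I would apply the Fa\`{a} di Bruno formula (Lemma \ref{3}) with the outer function $f(u)=u^{\alpha}$ and inner function $h(t)=g(t)=\frac{t}{e^{t}-1}$. We have $f^{(i)}(u)=\langle\alpha\rangle_{i}u^{\alpha-i}$ in the rising-power sense, but since we evaluate at $t=0$ where $g(0)=1$, this becomes simply $f^{(i)}(g(0))=\langle\alpha\rangle_{i}$; I need to be careful here and rewrite $\langle\alpha\rangle_{i}=(-1)^{i}\langle-\alpha\rangle_{i}$ so the falling factorial $\langle-\alpha\rangle_{i}$ in the statement emerges (the sign $(-1)^{i}$ will have to cancel against something). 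The Fa\`{a} di Bruno expansion then reads $\left.\frac{d^{k}}{dt^{k}}g(t)^{\alpha}\right|_{t=0}=\sum_{i=0}^{k}f^{(i)}(g(0))\,B_{k,i}\!\left(g'(0),g''(0),\dots,g^{(k-i+1)}(0)\right)$, so everything reduces to identifying the Bell polynomial $B_{k,i}$ evaluated at the Taylor coefficients of $g$.

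The key computational step is that the Taylor expansion of $g(t)=\frac{t}{e^{t}-1}$ starts $g(t)=\sum_{m\geq0}B_{m}\frac{t^{m}}{m!}$, but more usefully $\frac{t}{e^{t}-1}=\frac{1}{\sum_{m\geq0}t^{m}/(m+1)!}$; what I actually want is the expansion of $g$ minus its constant term. In fact $g'(t),g''(t),\dots$ at $0$ are (up to index shift) governed by the sequence $\tfrac12,\tfrac13,\tfrac14,\dots$, precisely the arguments appearing in Lemma with equation \eqref{6}: one checks $g^{(r)}(0)=B_{r}$, but the cleaner route is to observe $\frac{e^{t}-1}{t}=\sum_{m\geq0}\frac{t^{m}}{(m+1)!}$ has derivatives at $0$ equal to $\frac{1}{m+1}$, i.e.\ the sequence $\frac12,\frac13,\dots$, and then apply the identity \eqref{6}, namely $B_{n,k}\!\left(\frac12,\frac13,\dots,\frac1{n-k+2}\right)=\frac{n!}{(n+k)!}\sum_{i=0}^{k}(-1)^{k-i}\binom{n+k}{k-i}S(n+i,i)$, together with homogeneity (Lemma \ref{4}) to absorb any scaling. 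Substituting this into the Fa\`{a} di Bruno sum produces exactly $\sum_{i=0}^{k}\langle-\alpha\rangle_{i}\frac{k!}{(k+i)!}\sum_{j=0}^{i}(-1)^{i-j}\binom{k+i}{i-j}S(k+j,j)$, and the sign bookkeeping from $f^{(i)}$ must match the $(-1)^{k-i}$ in \eqref{6}. The main obstacle I anticipate is precisely this sign and index-shift bookkeeping: relating $\frac{d^k}{dt^k}g(t)^\alpha$ at $0$ to a Bell polynomial whose arguments are the shifted reciprocals $\frac1{i+1}$ rather than the Bernoulli-number derivatives of $g$ itself, and making the $(-1)^i$ from the falling/rising factorial conversion cancel correctly against the $(-1)^{i-j}$ coming from \eqref{6}. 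Once $x=0$ is substituted, only the $k=n$ term of the outer sum survives (since $x^{n-k}=0^{n-k}$), giving \eqref{11} directly.
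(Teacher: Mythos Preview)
Your overall architecture matches the paper's: Leibniz to split off $e^{xt}$, then Fa\`a di Bruno for the remaining power, then the Bell-polynomial identity \eqref{6} to bring in Stirling numbers. The gap is in the Fa\`a di Bruno setup. You take outer function $f(u)=u^{\alpha}$ and inner function $g(t)=t/(e^{t}-1)$, and this forces two problems that cannot be repaired by ``sign bookkeeping''. First, $f^{(i)}(1)=\langle\alpha\rangle_{i}$, and your proposed conversion $\langle\alpha\rangle_{i}=(-1)^{i}\langle-\alpha\rangle_{i}$ is false in the falling-factorial convention of the statement: $(-1)^{i}\langle-\alpha\rangle_{i}=\alpha(\alpha+1)\cdots(\alpha+i-1)$ is the \emph{rising} factorial, not $\langle\alpha\rangle_{i}$. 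Second, the derivatives $g^{(m)}(0)$ are the Bernoulli numbers $B_{m}$ (so $-\tfrac12,\tfrac16,0,-\tfrac{1}{30},\dots$), not the sequence $\tfrac12,\tfrac13,\tfrac14,\dots$; hence \eqref{6} does not apply to $B_{k,i}\bigl(g'(0),g''(0),\dots\bigr)$. The ``cleaner route'' you mention---using $(e^{t}-1)/t$---belongs to a \emph{different} composition, and you cannot graft its derivatives onto the Fa\`a di Bruno expansion of $g(t)^{\alpha}$.

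The fix, which is exactly what the paper does, is to choose the other decomposition: outer function $f(u)=u^{-\alpha}$ and inner function $h(t)=(e^{t}-1)/t$, so that $f(h(t))=\bigl(t/(e^{t}-1)\bigr)^{\alpha}$ as required. Then $f^{(i)}(h(0))=\langle-\alpha\rangle_{i}$ directly, with no conversion needed, and $h^{(m)}(0)=1/(m+1)$ (either from the Taylor series $h(t)=\sum_{m\ge0}t^{m}/(m+1)!$ or, as the paper presents it, from the integral representation $h(t)=\int_{1}^{e}s^{t-1}\,ds$), so \eqref{6} applies verbatim to $B_{k,i}\bigl(\tfrac12,\tfrac13,\dots\bigr)$. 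With this choice there is no sign cancellation to manage at all; the inner double sum drops out immediately, and the $x=0$ specialization is exactly as you describe.
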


\begin{proof}
Let us begin by writing $\left(  \dfrac{e^{t}-1}{t}\right)  ^{\alpha}=\left(
{\displaystyle\int_{1}^{e}}
s^{t-1}ds\right)  ^{\alpha}.$ From (\ref{2}) and (\ref{6}), we have%
\begin{align*}
&  \frac{d^{k}}{dt^{k}}\left(  \dfrac{e^{t}-1}{t}\right)  ^{-\alpha}\\
&  =%
{\displaystyle\sum\limits_{i=0}^{k}}
\left\langle -\alpha\right\rangle _{i}\left(  \dfrac{e^{t}-1}{t}\right)
^{-\alpha-i}B_{k,i}\left(
{\displaystyle\int_{1}^{e}}
s^{t-1}\ln sds,%
{\displaystyle\int_{1}^{e}}
s^{t-1}\ln^{2}sds,...,%
{\displaystyle\int_{1}^{e}}
s^{t-1}\ln^{k-i+1}sds\right) \\
&  \rightarrow%
{\displaystyle\sum\limits_{i=0}^{k}}
\left\langle -\alpha\right\rangle _{i}B_{k,i}\left(  \frac{1}{2},\frac{1}%
{3},...,\frac{1}{n-i+2}\right)  ,\text{ \ \ \ \ as }t\rightarrow0\\
&  =%
{\displaystyle\sum\limits_{i=0}^{k}}
\left\langle -\alpha\right\rangle _{i}\frac{k!}{\left(  k+i\right)  !}%
{\displaystyle\sum\limits_{j=0}^{i}}
\left(  -1\right)  ^{i-j}\binom{k+i}{i-j}S\left(  k+j,j\right)  ,
\end{align*}
Also $\left(  e^{xt}\right)  ^{(k)}=x^{k}e^{xt}\rightarrow x^{k},$ as
$t\rightarrow0.$ So, using the Leibnitz's formula for the $n$th derivative of
the product of two functions yields that
\begin{align*}
&  \lim_{t\rightarrow0}\frac{d^{n}}{dt^{n}}\left[  \left(  \dfrac{e^{t}-1}%
{t}\right)  ^{-\alpha}e^{xt}\right] \\
&  =%
{\displaystyle\sum\limits_{k=0}^{n}}
\binom{n}{k}%
{\displaystyle\sum\limits_{i=0}^{k}}
\left\langle -\alpha\right\rangle _{i}\frac{k!}{\left(  k+i\right)  !}%
{\displaystyle\sum\limits_{j=0}^{i}}
\left(  -1\right)  ^{i-j}\binom{k+i}{i-j}S\left(  k+j,j\right)  x^{n-k},
\end{align*}
which is equal to $B_{n}^{(\alpha)}(x)$ from the generating function
(\ref{0}). For $x=0,$ we immediately get the identity (\ref{11}).
\end{proof}

\begin{remark}
For $\alpha=1,$ noting the fact $\left\langle -1\right\rangle _{i}=\left(
-1\right)  ^{i}i!$, the equation (\ref{11}) becomes%
\begin{align*}
B_{n}  &  =%
{\displaystyle\sum\limits_{i=0}^{n}}
i!\frac{n!}{(n+i)!}%
{\displaystyle\sum\limits_{j=0}^{i}}
\left(  -1\right)  ^{j}\binom{n+i}{i-j}S(n+j,j)\\
&  =%
{\displaystyle\sum\limits_{j=0}^{n}}
\left(  -1\right)  ^{j}\frac{S(n+j,j)}{\binom{n+j}{j}}%
{\displaystyle\sum\limits_{i=j}^{n}}
\binom{i}{j}\\
&  =%
{\displaystyle\sum\limits_{j=0}^{n}}
\left(  -1\right)  ^{j}\frac{\binom{n+1}{j+1}}{\binom{n+j}{j}}S(n+j,j),
\end{align*}
which is \cite[Eq. 1.3]{qi2}.
\end{remark}

\begin{theorem}
\label{main1}The higher-order Euler polynomials $E_{n}^{(\alpha)}(x)$ can be
represented as
\[
E_{n}^{(\alpha)}(x)=%
{\displaystyle\sum\limits_{k=0}^{n}}
\binom{n}{k}%
{\displaystyle\sum\limits_{i=0}^{k}}
\left\langle -\alpha\right\rangle _{i}\frac{S(k,i)}{2^{i}}x^{n-k}.
\]
In particular, the higher-order Euler numbers $E_{n}^{(\alpha)}$ have the
following form%
\begin{equation}
E_{n}^{(\alpha)}=%
{\displaystyle\sum\limits_{k=0}^{n}}
\binom{n}{k}%
{\displaystyle\sum\limits_{i=0}^{k}}
\left\langle -\alpha\right\rangle _{i}S(k,i)2^{k-i}. \label{16}%
\end{equation}

\end{theorem}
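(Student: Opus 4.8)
The plan is to mimic the structure of the proof of Theorem \ref{main}, but with the advantage that the relevant Bell polynomial evaluates directly to a Stirling number, so no analogue of the integral-representation trick is needed. First I would write $\left(\dfrac{e^{t}+1}{2}\right)^{\alpha}$ and differentiate its reciprocal: applying the Fa\`a di Bruno formula \eqref{2} with the outer function $f(u)=u^{-\alpha}$ and the inner function $h(t)=\dfrac{e^{t}+1}{2}$, I get
\[
\frac{d^{k}}{dt^{k}}\left(\dfrac{e^{t}+1}{2}\right)^{-\alpha}
=\sum_{i=0}^{k}\left\langle-\alpha\right\rangle_{i}\left(\dfrac{e^{t}+1}{2}\right)^{-\alpha-i}B_{k,i}\!\left(\tfrac{e^{t}}{2},\tfrac{e^{t}}{2},\ldots,\tfrac{e^{t}}{2}\right),
\]
since every derivative of $h$ equals $e^{t}/2$. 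Letting $t\to 0$, the prefactor $\left(\dfrac{e^{t}+1}{2}\right)^{-\alpha-i}\to 1$, and by homogeneity \eqref{5} together with $B_{k,i}(1,\ldots,1)=S(k,i)$ from \eqref{12} we obtain $B_{k,i}(\tfrac12,\ldots,\tfrac12)=2^{-i}S(k,i)$. Hence $\displaystyle\lim_{t\to0}\frac{d^{k}}{dt^{k}}\left(\dfrac{e^{t}+1}{2}\right)^{-\alpha}=\sum_{i=0}^{k}\left\langle-\alpha\right\rangle_{i}\dfrac{S(k,i)}{2^{i}}$.

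Next I would bring in the factor $e^{xt}$ exactly as in Theorem \ref{main}: since $(e^{xt})^{(n-k)}=x^{n-k}e^{xt}\to x^{n-k}$ as $t\to0$, Leibniz's rule for the $n$th derivative of a product gives
\[
\lim_{t\to0}\frac{d^{n}}{dt^{n}}\left[\left(\dfrac{e^{t}+1}{2}\right)^{-\alpha}e^{xt}\right]
=\sum_{k=0}^{n}\binom{n}{k}\sum_{i=0}^{k}\left\langle-\alpha\right\rangle_{i}\dfrac{S(k,i)}{2^{i}}\,x^{n-k}.
\]
Comparing with the generating function \eqref{15}, the left-hand side equals $E_{n}^{(\alpha)}(x)$, which is the first claimed formula. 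Setting $x=0$ leaves only the $k=n$ term, namely $E_{n}^{(\alpha)}=\sum_{i=0}^{n}\left\langle-\alpha\right\rangle_{i}2^{-i}S(n,i)$; this is already \eqref{16} after I re-index. To match the stated double-sum form of \eqref{16} I would instead keep all $k$ but evaluate at $x=0$ using $x^{n-k}\big|_{x=0}=\delta_{k,n}$, or alternatively multiply and divide by an appropriate power of $2$ — that is, rewrite $2^{-i}S(k,i)=2^{k-i}S(k,i)\cdot 2^{-k}$ and absorb the $2^{-k}$; the cleanest route is to observe that \eqref{16} as printed is obtained by the substitution $x\mapsto 0$ in a variant where one uses $\bigl(\tfrac{2}{e^{t}+1}\bigr)^{\alpha}e^{xt}$ with the $2^{k}$ coming out of scaling $h(t)=e^{t}+1$ rather than $(e^{t}+1)/2$. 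I would therefore present the computation with $h(t)=e^{t}+1$ and $f(u)=2^{\alpha}u^{-\alpha}$ so that $B_{k,i}(1,\ldots,1)=S(k,i)$ directly and the $2^{\alpha+i}$ prefactor at $t=0$ becomes $2^{i}/2^{\alpha}\cdot 2^{\alpha}$...

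The only real subtlety, and the step I expect to need the most care, is bookkeeping the powers of $2$ so that the numbers formula comes out exactly as $\sum_{k}\binom{n}{k}\sum_{i}\left\langle-\alpha\right\rangle_{i}S(k,i)2^{k-i}$ rather than the superficially different $\sum_{i}\left\langle-\alpha\right\rangle_{i}S(n,i)2^{-i}$; these agree because at $x=0$ only $k=n$ survives and $S(n,i)2^{n-i}$ must then be reconciled with $S(n,i)2^{-i}$, which forces the convention $E_{n}^{(\alpha)}=2^{n}E_{n}^{(\alpha)}(1/2)$-style normalization already flagged in the introduction for the classical case. I would make this normalization explicit at the point of setting $x=0$ so that \eqref{16} is literally what the display produces; everything else is a routine application of Lemmas \ref{3}, \ref{4} and the generating function \eqref{15}.
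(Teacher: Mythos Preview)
Your derivation of the polynomial formula is correct and essentially identical to the paper's: Fa\`a di Bruno with outer function $u\mapsto u^{-\alpha}$ and inner function $h(t)=(e^{t}+1)/2$, the homogeneity identity \eqref{5} to extract the factor $(e^{t}/2)^{i}$, the evaluation $B_{k,i}(1,\ldots,1)=S(k,i)$, and Leibniz's rule combined with the generating function \eqref{15}.

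The gap is in your passage to \eqref{16}. Setting $x=0$ is the wrong specialization: it kills every term except $k=n$, and you end up with the single sum $\sum_{i=0}^{n}\langle-\alpha\rangle_{i}S(n,i)2^{-i}$, which is \emph{not} what \eqref{16} says (the double sum over $k$ in \eqref{16} is genuine, not a padded rewriting of a single sum). Your subsequent attempts to repair this by rescaling $h$ or redistributing powers of $2$ do not work, and the phrase ``make this normalization explicit at the point of setting $x=0$'' is self-contradictory. The correct move, which the paper's ``taking special case'' alludes to, is the Euler-number convention flagged in the introduction: $E_{n}^{(\alpha)}=2^{n}E_{n}^{(\alpha)}(1/2)$. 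Substituting $x=1/2$ into the polynomial formula gives $(1/2)^{n-k}$ in place of $x^{n-k}$; multiplying through by $2^{n}$ turns $2^{-i}(1/2)^{n-k}$ into $2^{k-i}$, and \eqref{16} drops out immediately with all terms $k=0,\ldots,n$ present. No alternative choice of $h$ or extra bookkeeping is needed.
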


\begin{proof}
By (\ref{2}), (\ref{5}) and (\ref{12}), we have%
\begin{align*}
\frac{d^{k}}{dt^{k}}\left(  \dfrac{e^{t}+1}{2}\right)  ^{-\alpha}  &  =%
{\displaystyle\sum\limits_{i=0}^{k}}
\left\langle -\alpha\right\rangle _{i}\left(  \dfrac{e^{t}+1}{2}\right)
^{-\alpha-i}B_{k,i}\left(  \frac{e^{t}}{2},\frac{e^{t}}{2},...,\frac{e^{t}}%
{2}\right) \\
&  =%
{\displaystyle\sum\limits_{i=0}^{k}}
\left\langle -\alpha\right\rangle _{i}\left(  \dfrac{e^{t}+1}{2}\right)
^{-\alpha-i}\left(  \frac{e^{t}}{2}\right)  ^{i}B_{k,i}\left(
1,1,...,1\right) \\
&  \rightarrow%
{\displaystyle\sum\limits_{i=0}^{k}}
\left\langle -\alpha\right\rangle _{i}\frac{S\left(  k,i\right)  }{2^{i}%
},\text{ \ \ \ \ \ \ as }t\rightarrow0.
\end{align*}
So, from the Leibnitz's rule again and the generating function for
higher-order Euler polynomials, given by (\ref{15}), we obtain that%
\begin{align*}
E_{n}^{(\alpha)}(x)  &  =\lim_{t\rightarrow0}\frac{d^{n}}{dt^{n}}\left[
\left(  \dfrac{e^{t}+1}{2}\right)  ^{-\alpha}e^{xt}\right] \\
&  =%
{\displaystyle\sum\limits_{k=0}^{n}}
\binom{n}{k}%
{\displaystyle\sum\limits_{i=0}^{k}}
\left\langle -\alpha\right\rangle _{i}\frac{S\left(  k,i\right)  }{2^{i}%
}x^{n-k}.
\end{align*}
Taking special case gives the closed form (\ref{16}) immediately for
higher-order Euler numbers.
\end{proof}

\begin{remark}
For $\alpha=1,$ the counterpart closed formula for Euler polynomials can be
derived. Moreover, setting more special cases leads to similar formula for
Euler numbers.
\end{remark}

\section{Determinantal expressions}

This section is devoted to demonstrate some determinantal expressions for
higher order Bernoulli and Euler polynomials.

\begin{theorem}
\label{main2}The higher-order Bernoulli polynomials $B_{n}^{(\alpha)}(x)$ can
be represented in terms of the following determinant as%
\[
B_{n}^{(\alpha)}\left(  x\right)  =\left(  -1\right)  ^{n}\left\vert
\begin{array}
[c]{cccccc}%
1 & \gamma_{0} & 0 & \ldots & 0 & 0\\
x & \gamma_{1} & \gamma_{0} & \ldots & 0 & 0\\
x^{2} & \gamma_{2} & \binom{2}{1}\gamma_{1} & \ldots & 0 & 0\\
\vdots & \vdots & \vdots & \ddots & \vdots & \vdots\\
x^{n-2} & \gamma_{n-2} & \binom{n-2}{1}\gamma_{n-3} & ... & \gamma_{0} & 0\\
x^{n-1} & \gamma_{n-1} & \binom{n-1}{1}\gamma_{n-2} & \ldots & \binom
{n-1}{n-2}\gamma_{1} & \gamma_{0}\\
x^{n} & \gamma_{n} & \binom{n}{1}\gamma_{n-1} & \ldots & \binom{n}{n-2}%
\gamma_{2} & \binom{n}{n-1}\gamma_{1}%
\end{array}
\right\vert ,
\]
where
\[
\gamma_{n}=%
{\displaystyle\sum\limits_{i=0}^{n}}
\left\langle \alpha\right\rangle _{i}\frac{n!}{\left(  n+i\right)  !}%
{\displaystyle\sum\limits_{j=0}^{i}}
\left(  -1\right)  ^{i-j}\binom{n+i}{i-j}S\left(  n+j,j\right)  .
\]

\end{theorem}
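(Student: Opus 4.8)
The plan is to express $B_n^{(\alpha)}(x)$ as the $n$th derivative at $t=0$ of a ratio of two functions, and then apply Lemma \ref{1} directly. Specifically, write
\[
\left(\dfrac{t}{e^t-1}\right)^{\alpha}e^{xt}=\frac{e^{xt}}{\left(\frac{e^t-1}{t}\right)^{\alpha}}=\frac{p(t)}{q(t)},
\]
with $p(t)=e^{xt}$ and $q(t)=\left(\frac{e^t-1}{t}\right)^{\alpha}$. Since $q(0)=1\neq 0$, the hypotheses of Lemma \ref{1} are met on a neighbourhood of $0$. The generating function (\ref{0}) gives $B_n^{(\alpha)}(x)=\lim_{t\to 0}\frac{d^n}{dt^n}\left[\frac{p(t)}{q(t)}\right]$, so I would evaluate the determinant in (\ref{10}) (equivalently the matrix $W_{(n+1)\times(n+1)}$) at $t=0$ and read off the claimed formula.

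The next step is to identify the entries of the determinant at $t=0$. For the first column, $p^{(l-1)}(0)=\left.\frac{d^{l-1}}{dt^{l-1}}e^{xt}\right|_{t=0}=x^{l-1}$, which produces the column $1,x,x^2,\dots,x^n$. For the remaining columns I need $q^{(m)}(0)$; by the same Fa\`a di Bruno / Bell-polynomial computation as in the proof of Theorem \ref{main} — but now with exponent $+\alpha$ in place of $-\alpha$ — one gets
\[
\lim_{t\to 0}\frac{d^m}{dt^m}\left(\dfrac{e^t-1}{t}\right)^{\alpha}
=\sum_{i=0}^{m}\left\langle\alpha\right\rangle_i \frac{m!}{(m+i)!}\sum_{j=0}^{i}(-1)^{i-j}\binom{m+i}{i-j}S(m+j,j)=\gamma_m,
\]
using (\ref{2}), (\ref{6}) and the expansion $\left(\frac{e^t-1}{t}\right)^{\alpha}=\left(\int_1^e s^{t-1}\,ds\right)^{\alpha}$ exactly as before. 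Then the entry $v_{i,j}$ of Lemma \ref{1} becomes $\binom{i-1}{j-1}q^{(i-j)}(0)=\binom{i-1}{j-1}\gamma_{i-j}$ for $i\geq j$ and $0$ otherwise, which is precisely the pattern of the stated matrix (the super-diagonal filled with $\gamma_0$, etc.). Finally, since $q(0)=1$, the prefactor $\frac{(-1)^n}{q(0)^{n+1}}$ reduces to $(-1)^n$, yielding the displayed determinantal expression.

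The main obstacle — really the only nontrivial point — is justifying the passage to the limit $t\to 0$ inside each entry, i.e. that the derivatives $q^{(m)}(t)$ and $p^{(l-1)}(t)$ are continuous at $t=0$ with the claimed limiting values, and that evaluating the determinant of the limit equals the limit of the determinant (immediate, since the determinant is a polynomial in finitely many continuous entries). The computation of $\gamma_m=\lim_{t\to 0}q^{(m)}(t)$ is the same routine argument already carried out in Theorem \ref{main} with $-\alpha$ replaced by $\alpha$, so I would simply reference that computation rather than redo it. Everything else is a direct substitution into Lemma \ref{1}.
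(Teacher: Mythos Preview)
Your proposal is correct and follows essentially the same approach as the paper: apply Lemma~\ref{1} with $p(t)=e^{xt}$ and $q(t)=\left(\frac{e^t-1}{t}\right)^{\alpha}$, compute $q^{(m)}(0)=\gamma_m$ by repeating the Fa\`a di Bruno argument from Theorem~\ref{main} with $\alpha$ in place of $-\alpha$, and take $t\to 0$ using $q(0)=1$. The paper's own proof is slightly terser but proceeds identically.
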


\begin{proof}
We use Lemma \ref{1} for $p\left(  t\right)  =e^{xt}$ and $q\left(  t\right)
=\left(  \left(  e^{t}-1\right)  /t\right)  ^{\alpha}.$ Note that if we
proceed similar manipulations to the proof of Theorem \ref{main}, then, we
deduce that
\[
\lim_{t\rightarrow0}\frac{d^{n}}{dt^{n}}q\left(  t\right)  =%
{\displaystyle\sum\limits_{i=0}^{n}}
\left\langle \alpha\right\rangle _{i}\frac{n!}{\left(  n+i\right)  !}%
{\displaystyle\sum\limits_{j=0}^{i}}
\left(  -1\right)  ^{i-j}\binom{n+i}{i-j}S\left(  n+j,j\right)  :=\gamma_{n}.
\]
So, we have%
\begin{align*}
&  \frac{d^{n}}{dt^{n}}\left[  \frac{e^{xt}}{\left(  \left(  e^{t}-1\right)
/t\right)  ^{\alpha}}\right] \\
&  =\frac{\left(  -1\right)  ^{n}}{\left(  \left(  e^{t}-1\right)  /t\right)
^{(n+1)\alpha}}\left\vert
\begin{array}
[c]{cccccc}%
e^{xt} & q & 0 & \ldots & 0 & 0\\
xe^{xt} & q^{\prime} & q & \ldots & 0 & 0\\
x^{2}e^{xt} & q^{\prime\prime} & \binom{2}{1}q^{\prime} & \ldots & 0 & 0\\
\vdots & \vdots & \vdots & \ddots & \vdots & \vdots\\
x^{n-2}e^{xt} & q^{(n-2)} & \binom{n-2}{1}q^{(n-3)} & ... & q & 0\\
x^{n-1}e^{xt} & q^{(n-1)} & \binom{n-1}{1}q^{(n-2)} & \ldots & \binom
{n-1}{n-2}q^{\prime} & q\\
x^{n}e^{xt} & q^{(n)} & \binom{n}{1}q^{(n-1)} & \ldots & \binom{n}%
{n-2}q^{\prime\prime} & \binom{n}{n-1}q^{\prime}%
\end{array}
\right\vert \\
&  \rightarrow\left(  -1\right)  ^{n}\left\vert
\begin{array}
[c]{cccccc}%
1 & \gamma_{0} & 0 & \ldots & 0 & 0\\
x & \gamma_{1} & \gamma_{0} & \ldots & 0 & 0\\
x^{2} & \gamma_{2} & \binom{2}{1}\gamma_{1} & \ldots & 0 & 0\\
\vdots & \vdots & \vdots & \ddots & \vdots & \vdots\\
x^{n-2} & \gamma_{n-2} & \binom{n-2}{1}\gamma_{n-3} & ... & \gamma_{0} & 0\\
x^{n-1} & \gamma_{n-1} & \binom{n-1}{1}\gamma_{n-2} & \ldots & \binom
{n-1}{n-2}\gamma_{1} & \gamma_{0}\\
x^{n} & \gamma_{n} & \binom{n}{1}\gamma_{n-1} & \ldots & \binom{n}{n-2}%
\gamma_{2} & \binom{n}{n-1}\gamma_{1}%
\end{array}
\right\vert
\end{align*}
as $t\rightarrow0.$ From the generating function, given by (\ref{0}), we reach
the desired result.
\end{proof}

\begin{remark}
We mention that for the special case $x=0,$ the analog determinantal
expression for higher-order Bernoulli numbers $B_{n}^{(\alpha)}$ can be
offered. Moreover, for $\alpha=1,$ and $\alpha=1$ and $x=0,$ the similar
representations can be obtained for classical Bernoulli polynomials and
numbers, respectively.
\end{remark}

\begin{theorem}
The higher-order Euler polynomials $E_{n}^{(\alpha)}(x)$ can be represented in
terms of the following determinant as%
\[
E_{n}^{(\alpha)}\left(  x\right)  =\left(  -1\right)  ^{n}\left\vert
\begin{array}
[c]{cccccc}%
1 & \beta_{0} & 0 & \ldots & 0 & 0\\
x & \beta_{1} & \beta_{0} & \ldots & 0 & 0\\
x^{2} & \beta_{2} & \binom{2}{1}\beta_{1} & \ldots & 0 & 0\\
\vdots & \vdots & \vdots & \ddots & \vdots & \vdots\\
x^{n-2} & \beta_{n-2} & \binom{n-2}{1}\beta_{n-3} & ... & \beta_{0} & 0\\
x^{n-1} & \beta_{n-1} & \binom{n-1}{1}\beta_{n-2} & \ldots & \binom{n-1}%
{n-2}\beta_{1} & \beta_{0}\\
x^{n} & \beta_{n} & \binom{n}{1}\beta_{n-1} & \ldots & \binom{n}{n-2}\beta_{2}
& \binom{n}{n-1}\beta_{1}%
\end{array}
\right\vert ,
\]
where%
\[
\beta_{n}=%
{\displaystyle\sum\limits_{i=0}^{n}}
\left\langle \alpha\right\rangle _{i}\frac{S\left(  n,i\right)  }{2^{i}}.
\]

\end{theorem}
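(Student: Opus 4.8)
The plan is to mirror exactly the proof of Theorem \ref{main2}, swapping the role of the Bernoulli-type denominator for the Euler-type one. Concretely, I would apply Lemma \ref{1} with $p(t)=e^{xt}$ and $q(t)=\left((e^{t}+1)/2\right)^{\alpha}$, so that
\[
\frac{e^{xt}}{\left((e^{t}+1)/2\right)^{\alpha}}=\left(\frac{2}{e^{t}+1}\right)^{\alpha}e^{xt}
\]
is precisely the generating function in \eqref{15}. Lemma \ref{1} then expresses $\frac{d^{n}}{dt^{n}}$ of this ratio as $(-1)^{n}q(t)^{-(n+1)}$ times the determinant of the $(n+1)\times(n+1)$ matrix whose first column has entries $p^{(l-1)}(t)=x^{l-1}e^{xt}$ and whose remaining columns are built from the derivatives $q^{(i-j)}(t)$ with binomial coefficients $\binom{i-1}{j-1}$.

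The key computational input is the limit $\lim_{t\to 0}\frac{d^{n}}{dt^{n}}q(t)=\beta_{n}$. This is obtained exactly as in the proof of Theorem \ref{main1}: writing $q(t)=\left((e^{t}+1)/2\right)^{\alpha}$ and applying the Fa\`a di Bruno formula \eqref{2} with outer function $u\mapsto u^{\alpha}$ and inner function $h(t)=(e^{t}+1)/2$, one gets
\[
\frac{d^{n}}{dt^{n}}q(t)=\sum_{i=0}^{n}\left\langle \alpha\right\rangle _{i}\left(\frac{e^{t}+1}{2}\right)^{\alpha-i}B_{n,i}\!\left(\frac{e^{t}}{2},\frac{e^{t}}{2},\dots,\frac{e^{t}}{2}\right).
\]
By Lemma \ref{4} (homogeneity \eqref{5}) this equals $\sum_{i}\left\langle \alpha\right\rangle _{i}\left((e^{t}+1)/2\right)^{\alpha-i}(e^{t}/2)^{i}B_{n,i}(1,\dots,1)$, and letting $t\to 0$ together with $B_{n,i}(1,\dots,1)=S(n,i)$ from \eqref{12} yields $\beta_{n}=\sum_{i=0}^{n}\left\langle \alpha\right\rangle _{i}S(n,i)/2^{i}$, as claimed. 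Note the exponent $\alpha$ rather than $-\alpha$ appears because here $q$ is the denominator raised to the positive power.

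Finally I would take $t\to 0$ throughout the determinantal identity: the first column entries $x^{l-1}e^{xt}\to x^{l-1}$, the entries $q^{(i-j)}(t)\to\beta_{i-j}$, and the prefactor $(-1)^{n}q(t)^{-(n+1)}\to(-1)^{n}$ since $q(0)=\left((1+1)/2\right)^{\alpha}=1$. Comparing with $\sum_{n\geq0}E_{n}^{(\alpha)}(x)t^{n}/n!$ from \eqref{15} — that is, using $E_{n}^{(\alpha)}(x)=\lim_{t\to 0}\frac{d^{n}}{dt^{n}}\big[\big(2/(e^{t}+1)\big)^{\alpha}e^{xt}\big]$ — gives the stated determinant. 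I do not expect any real obstacle; the only point requiring a little care is the continuity argument allowing the limit $t\to 0$ to pass inside the determinant (justified because each entry is a smooth function of $t$ near $0$ and the determinant is a polynomial in the entries), and keeping track of the sign/exponent bookkeeping so that $\left\langle \alpha\right\rangle _{i}$ rather than $\left\langle -\alpha\right\rangle _{i}$ appears in $\beta_{n}$.
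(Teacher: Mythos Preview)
Your proposal is correct and follows exactly the approach the paper intends: it omits the proof, saying only that it ``can be verified by proceeding as in the proof of Theorem \ref{main2},'' which is precisely the strategy you describe (apply Lemma \ref{1} with $p(t)=e^{xt}$, $q(t)=\bigl((e^{t}+1)/2\bigr)^{\alpha}$, compute $\lim_{t\to0}q^{(n)}(t)=\beta_{n}$ via Fa\`a di Bruno as in Theorem \ref{main1}, and let $t\to0$). Your attention to the $\langle\alpha\rangle_i$ versus $\langle-\alpha\rangle_i$ bookkeeping and to $q(0)=1$ is exactly what is needed.
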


\begin{proof}
The proof can be verified by proceeding as in the proof of Theorem
\ref{main2}. So, we omit it.
\end{proof}

\begin{remark}
The special values of the Bell polynomials of the second kind $B_{n,k}$ are
worthy in combinatorics and number theory. In this respect, $B_{n,k}$ has been
applied in order to cope with some difficult problems and obtain significant
results in many studies (see for example \cite{qi9,qi12,qi13,qi15,qi16}).
\end{remark}

\end{document}